\documentclass[a4paper,12pt]{article}

\usepackage{amsmath}
\usepackage{amsthm}
\usepackage{dsfont}
\usepackage[T1]{fontenc}
\usepackage{cite}
\usepackage{latexsym,amssymb,amscd}
\usepackage{amsfonts}
\usepackage[mathscr]{eucal}
\usepackage{enumerate}
\usepackage{longtable}
\usepackage{tikz}
\usepackage{multirow}
\usepackage{authblk}

\usetikzlibrary{arrows,snakes,backgrounds}
\usetikzlibrary{matrix}
\tikzstyle{point}=[circle,draw,inner sep=0pt,minimum size=4mm]
\tikzstyle{ordinario}=[circle,draw,inner sep=0pt,minimum size=3mm]




\theoremstyle{plain}
 \newtheorem{theorem}{Theorem}

 \newtheorem{corollary}[theorem]{Corollary}
\theoremstyle{remark}
 
\theoremstyle{definition}



\begin{document}
\bibliographystyle{apalike}

\title{Deducing factoring methods through concrete material}

\author{Ivon Dorado and Ricardo Torres}
\date{}
\maketitle

\begin{abstract}
\noindent {\footnotesize We formulate and prove a criterion for reducibility of a quadratic polynomial over the integers. The main theorem was suggested by the teaching experience with the concrete material called "the polynomial box" \cite{SMGpolyBox}. Through the corollaries we relate our theorem and the use of concrete material with some well know factoring methods for quadratic polynomial with integer coeficients.
}
\end{abstract}

 {\small {\it 2010 Mathematics Subject Classification}: 97B50, 97A80, 97H40, 12D05.\\

 {\small {\it Keywords}: Polynomial box, concrete material, factorization criteria, quadratic polynomials.

\section{Introduction}\label{1}

Since the beginning of algebra, the development of algebraic operations was based on solving real-life problems using geometry and different concrete representations that allowed to simplify some situations. The use of those elements has played a fundamental role in the construction of mathematical knowledge.

To take into account those remarkable facts of the historical and cultural legacy of mathematics can be quite useful to contribute to the teaching of mathematics and its study.

The present paper arises from the research entitled 
\textit{Teaching of the operations with first and second degree polynomials in one variable under the approach of solving problems} in which the inductive and metacognitive approach exposed by Mason, Burton and Stacey \cite{MBS1982} was applied as a teaching strategy and also the use and comparison of different languages such as: concrete, supported by the \textit{polynomial box} \cite{SMGpolyBox}, pictorial and algebraic.

During the development of teaching material: workshop guides, and different problems, it was possible to infer a relation between the traditional factorization methods for reducible quadratic polynomials and the concrete representation that was worked with the polynomial box. The results of these observations are presented below.

\section{Main results}

This paper is the product of a research which aimed to teach algebraic operations by using a concrete material known as the \textit{polynomial box} to students of a public highschool in Bogot\'a, Colombia.

Factoring a quadratic polynomial with the polynomial box is presented to the students like a puzzle, where they have to build a rectangle with \textit{cards} like in the figure

$$
\begin{tikzpicture}
\fill[blue!30!white] (0,0) rectangle (2,2);
\draw[line width=1.5mm,blue] (0,0) rectangle (2,2);
\node (x2) at (1.1,1.1) {\LARGE $x^2$};
\fill[green!50!white] (3,0) rectangle (4.3,2);
\draw[line width=1.5mm,blue] (3,0) rectangle (3,2);
\draw[line width=1.5mm,blue] (4.3,0) rectangle (4.3,2);
\draw[line width=1.5mm,red] (4.3775,2) -- (2.922,2);
\draw[line width=1.5mm,red] (4.3775,0) -- (2.922,0);
\node (x1) at (3.67,1) {\LARGE $x$};
\fill[red!70!black] (5.3,0) rectangle (6.6,1.3);
\draw[line width=1.5mm,red] (5.3,0) rectangle (6.6,1.3);
\node (one) at (5.97,0.67) {\Large $1$};
\end{tikzpicture}
$$

For doing this, you just have to follow a simple rule: when you put a card next to another, the sides that will be adjacent must have the same length.

The cards in the figure were adapted from the cards given by Soto, Mosquera and Gomez \cite{SMGpolyBox}. The sides of each rectangle were color in order to highlight the difference between the length of the side and the area of the card. This was also useful to illustrate the students the rule of the adjacent sides (for more information about the use of polynomial box, see \cite{SMGpolyBox}).

Out of this research it was possible to rediscover different ways to factoring quadratic polynomials which turn into the following theorems and corollaries.

\begin{theorem}\label{main}
Let $f(x)=ax^2 + bx + c$ be a polynomial with integer coefficients and $a\neq 0$. Then $f(x)$ can be factorized over $\mathbb{Z}$ in two linear polynomials if and only if there exist $p, q \in \mathbb{Z}$ such that $pq=ac$ and $p+q=b$. 

In this case, the roots of $f(x)$ are $-\dfrac{p}{a}$ and $-\dfrac{q}{a}$.
\end{theorem}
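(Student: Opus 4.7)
The plan is to treat the two directions separately. The forward implication is a routine coefficient comparison, so the real content lies in the converse, where the main subtlety is distributing an extra factor of $a$ between the two linear factors. Concretely, if $f(x)=(a_1x+c_1)(a_2x+c_2)$ with $a_i,c_i\in\mathbb{Z}$, expanding and matching coefficients gives $a=a_1a_2$, $c=c_1c_2$, and $b=a_1c_2+a_2c_1$; setting $p:=a_1c_2$ and $q:=a_2c_1$ then yields $p+q=b$ and $pq=a_1a_2c_1c_2=ac$ at once.

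For the converse, the starting point is the identity
$$af(x)=a^2x^2+a(p+q)x+pq=(ax+p)(ax+q),$$
which holds in $\mathbb{Z}[x]$ whenever $p+q=b$ and $pq=ac$. The difficulty is that this factors $af$ rather than $f$ itself, so the extra $a$ must be absorbed into the two linear factors. To do this I would set $d=\gcd(a,p)$ and write $a=da'$, $p=dp'$ with $\gcd(a',p')=1$. The hypothesis $pq=ac$ becomes $p'q=a'c$, so Euclid's lemma (applied to $a'\mid p'q$ with $\gcd(a',p')=1$) gives $a'\mid q$. Writing $q=a'q'$ then forces $c=p'q'$, and a direct expansion of
$$f(x)=(a'x+p')(dx+q')$$
confirms that the three coefficients are indeed $a$, $b=p+q$, and $c$, and that both factors are genuinely linear since $a\neq 0$ forces $a',d\neq 0$. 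For the root statement, setting each factor to zero yields $x=-p'/a'=-p/a$ and $x=-q'/d=-q/a$, as required.

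The main obstacle will be precisely this distribution step in the converse. The identity $af(x)=(ax+p)(ax+q)$ is immediate, but turning it into a genuine factorization of $f$ in $\mathbb{Z}[x]$ is where the arithmetic content of the theorem lives; once Euclid's lemma is used to split $a$ properly between the two factors via $d=\gcd(a,p)$, both the factorization and the formulas for the roots fall out by direct substitution.
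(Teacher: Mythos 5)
Your proposal is correct and follows essentially the same route as the paper: in the converse both arguments split $a$ via $d=\gcd(a,p)$, use coprimality (Euclid's lemma) to conclude that the cofactor $a'$ divides $q$, and assemble the factorization $(a'x+p')(dx+q')$, which is the paper's $(q_1x+q_2)(p_1x+p_2)$ up to relabeling. The only cosmetic difference is that you read the roots directly off the linear factors, while the paper computes the discriminant $(p-q)^2$ and invokes the quadratic formula.
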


\begin{proof}
Consider 
$$f(x)=(p_1x+p_2)(q_1x+q_2);$$
then 
$$a=p_1q_1, \hspace{5mm} b=p_1q_2+p_2q_1 \hspace{5mm}\text{ and } \hspace{5mm}c=p_2q_2.$$

We define $p=p_1q_2$ and $q=p_2q_1$, in such a way that the required conditions are satisfied.

Conversely, suppose that $pq=ac$ and $p+q=b$, for some $p, q \in \mathbb{Z}$. We denote
$$p_1= m.c.d. (p,a).$$

There exist two integer $q_1$ and $q_2$ such that $a=p_1q_1$ and $p=p_1q_2$. We have
$$p_1q_2q=p_1q_1c;$$
as $a\neq 0$, then $p_1\neq 0$ and $q_2q=q_1c$.

If $p=0$, 
$$p_1=a, \hspace{5mm} q_1=1 \hspace{5mm}\text{ and } \hspace{5mm} q_2=0.$$

In any case, notice that $m.c.d. (q_1,q_2) = 1$. Therefore $q_1 | q$, so there exists $p_2\in \mathbb{Z}$ such that 
$q= q_1p_2.$ Then
$$q_2q_1p_2=q_1c;$$
and $p_2q_2=c$.

We have
$$f(x)=ax^2 + bx + c$$
$$= p_1q_1 x^2 + (p_1q_2 + q_1p_2)x + p_2q_2$$
$$= (p_1x+p_2)(q_1x+q_2);$$
which proves the equivalence.

Now consider the existence of $p$ and $q$ as required, and the quadratic formula to find the roots of $f(x)$. The discriminant of the equation is a square integer, namely
$$b^2-4ac = (p+q)^2 - 4pq$$
$$=p^2+2pq+q^2-4pq$$
$$=(p-q)^2.$$

Then the roots of the polynomial are given by
$$\frac{-b\pm |p-q|}{2a};$$
but $b=p+q$, so the roots of $f(x)$ are
$$-\dfrac{p}{a}\ \ \ \text{ and }\ \ \  -\dfrac{q}{a};$$
finishing our proof.
\end{proof}

This theorem is specially useful to formulate factoring exercises, because given two no null integers $p$ and $q$, by Theorem \ref{main}, the polynomial $px^2+(p+q)x+q$ and its reciprocal $qx^2+(p+q)x+p$ are factorable.

In fact, the reciprocal polynomial criterion and the Eisenstein criterion for irreducibility of a polynomial of the form $ax^2+bx+c \in \mathbb{Z}[x]$ can be viewed as corollaries of our main theorem.

\begin{corollary}{Reciprocal polynomial criterion.}
A quadratic polynomial 
$$ax^2+bx+c \in \mathbb{Z}[x];$$ 
can be factorized over $\mathbb{Z}$ in two linear polynomials if and only if its reciprocal polynomial 
$$cx^2+bx+a \in \mathbb{Z}[x];$$ 
is factorable over $\mathbb{Z}$ in two linear factors.
\end{corollary}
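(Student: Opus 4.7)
The plan is to apply Theorem \ref{main} to each of the two polynomials and observe that the resulting conditions are literally the same. The key point is that the criterion supplied by Theorem \ref{main} depends on the coefficients $a,b,c$ only through the product $ac$ and the middle coefficient $b$, and both of these are preserved when we interchange $a$ with $c$.

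First I would note that the statement tacitly assumes both polynomials are quadratic, so $a\neq 0$ and $c\neq 0$; this ensures Theorem \ref{main} is applicable to each of them. Then, for the forward direction, I would apply Theorem \ref{main} to $ax^2+bx+c$ to obtain integers $p,q$ with $pq=ac$ and $p+q=b$. The very same $p,q$ satisfy $pq=ca$ and $p+q=b$, so Theorem \ref{main} applied to the reciprocal polynomial $cx^2+bx+a$ (using that $c\neq 0$) yields a factorization over $\mathbb{Z}$ into two linear factors.

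The converse is identical by symmetry: any pair $p,q$ witnessing factorability of $cx^2+bx+a$ simultaneously witnesses factorability of $ax^2+bx+c$, since the condition $pq=ac$ is symmetric in $a$ and $c$.

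There is really no serious obstacle in this corollary; essentially it just records the $a\leftrightarrow c$ symmetry already built into the statement of Theorem \ref{main}. The only subtlety worth flagging in the write-up is the nondegeneracy hypothesis $c\neq 0$, which is what makes the reciprocal polynomial genuinely quadratic and allows Theorem \ref{main} to be invoked on it.
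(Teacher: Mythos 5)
Your proposal is correct and follows essentially the same route as the paper: both directions reduce to Theorem \ref{main} via the observation that the conditions $pq=ac$ and $p+q=b$ are symmetric under interchanging $a$ and $c$. Your explicit remark that $c\neq 0$ is needed for the reciprocal polynomial to be quadratic is a small point the paper leaves implicit, but it does not change the argument.
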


\begin{proof}
The polynomial $ax^2+bx+c$ is factorable if and only if there exist $p, q \in \mathbb{Z}$ such that $pq=ac$ and $p+q=b$, by Theorem \ref{main}, this is equivalent to the polynomial $cx^2+bx+a$ is factorable.
\end{proof}

\begin{corollary}{Eisenstein criterion.}
Let $f(x)=ax^2+bx+c$ a polynomial with integer coefficients and $a\neq 0$. If there exists a prime number $r$ which is a divisor of $b$ and $c$, but does not divide $a$, and such that $r^2$ does not divide $c$, then $f(x)$ is irreducible over $\mathbb{Q}$. 
\end{corollary}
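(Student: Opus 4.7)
The plan is to argue by contraposition: assume $f(x)$ is reducible over $\mathbb{Q}$ and deduce that $r^2 \mid c$, contradicting the hypothesis. The bridge between the corollary (stated over $\mathbb{Q}$) and Theorem \ref{main} (stated over $\mathbb{Z}$) is the standard fact that a degree-two integer polynomial which factors over $\mathbb{Q}$ already admits a factorization into two linear polynomials with integer coefficients (apply Gauss's lemma, or equivalently the rational root theorem together with clearing denominators). So the reducibility hypothesis supplies integers $p, q$ with $pq = ac$ and $p+q = b$, via Theorem \ref{main}.

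Next I would extract divisibility information from the Eisenstein-type hypotheses. Since $r \mid c$, we have $r \mid ac = pq$, and as $r$ is prime, Euclid's lemma yields $r \mid p$ or $r \mid q$; without loss of generality $r \mid p$. Then $r \mid b = p + q$ together with $r \mid p$ forces $r \mid q$. Therefore $r^2 \mid pq = ac$. Using once more that $r$ is prime together with the hypothesis $r \nmid a$, Euclid's lemma gives $r^2 \mid c$, which is the desired contradiction.

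The only delicate step is the passage from rational reducibility to integer reducibility in two linear factors, so that Theorem \ref{main} becomes applicable; once this is in place, the rest is a short divisibility chase. Everything else is essentially symmetric bookkeeping with $p$ and $q$, and presents no real obstacle.
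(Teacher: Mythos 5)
Your proposal is correct and follows essentially the same route as the paper: both arguments pass through Theorem \ref{main} to reduce irreducibility to the nonexistence of integers $p,q$ with $pq=ac$ and $p+q=b$, and then run the same divisibility chase with the prime $r$. The only cosmetic difference is that the paper uses $r^2\nmid c$ to show exactly one of $p,q$ is divisible by $r$ (so $r\nmid p+q=b$), while you use $r\mid b$ to force $r$ to divide both and contradict $r^2\nmid c$; you are in fact more explicit than the paper about the Gauss's lemma step linking reducibility over $\mathbb{Q}$ to a factorization over $\mathbb{Z}$.
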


\begin{proof}
If there are two integers $p$ and $q$ such that $pq=ac$, as $r|c$ but $r\nmid a$ and $r^2 \nmid c$, then
$r|p$ or $r|q$ but not both. Therefore $r\nmid p+q$.

By the hypothesis, $r$ is a divisor of $b$, so there are no integers $p$ and $q$ such that $pq=ac$ and $p+q=b$. Using Theorem \ref{main}, the polynomial $f(x)$ can not be factorized in two linear factors over $\mathbb{Z}$, then $f(x)$ does not have rational roots, so it is irreducible over $\mathbb{Q}$ because it is a quadratic polynomial. 
\end{proof}

Theorem \ref{main} was suggested by the use of the polynomial box because to factorize the polynomial $ax^2+bx+c$ with it, we need to construct two rectangles with only one common vertex: one with $|a|$ cards of type $x^2$, and the other with $|c|$ cards of type 1. We obtain a figure like one of the following or its rotations 

$$
\begin{tikzpicture}
\fill[blue!30!white] (0,0) rectangle (4,2);
\node (x2) at (2,1) {$|a|$ cards of type $x^2$};
\fill[red!70!black] (4,2) rectangle (7,3);
\node (one) at (5.5,2.5) {$|c|$ cards of type $1$};
\end{tikzpicture}
$$

$$
\begin{tikzpicture}
\fill[blue!30!white] (3,0) rectangle (7,2);
\node (x2) at (5,1) {$|a|$ cards of type $x^2$};
\fill[red!70!black] (0,2) rectangle (3,3);
\node (one) at (1.5,2.5) {$|c|$ cards of type $1$};
\end{tikzpicture}
$$

This is equivalent to find divisors of $a$ and $c$.   

Then we set $|b|$ cards of $x$ type, constructing two rectangles with common sides with the previous. If we get a rectangle like one of the following or its rotations
$$
\begin{tikzpicture}
\fill[blue!30!white] (0,0) rectangle (4,2);
\node (x2) at (2,1) {$|a|$ cards of type $x^2$};
\fill[red!70!black] (4,2) rectangle (7,3);
\node (one) at (5.5,2.5) {$|c|$ cards of type $1$};
\fill[green!50!white] (0,2) rectangle (4,3);
\node (p) at (2,2.5) {$|p|$ cards of type $x$};
\fill[green!50!white] (4,0) rectangle (7,2);
\node (q) at (5.5,1) {$|q|$ cards of type $x$};
\end{tikzpicture}
$$

$$
\begin{tikzpicture}
\fill[blue!30!white] (3,0) rectangle (7,2);
\node (x2) at (5,1) {$|a|$ cards of type $x^2$};
\fill[red!70!black] (0,2) rectangle (3,3);
\node (one) at (1.5,2.5) {$|c|$ cards of type $1$};
\fill[green!50!white] (0,0) rectangle (3,2);
\node (q) at (1.5,1) {$|q|$ cards of type $x$};
\fill[green!50!white] (3,2) rectangle (7,3);
\node (p) at (5,2.5) {$|p|$ cards of type $x$};
\end{tikzpicture}
$$

then the polynomial is factorized and the \textit{length} of the sides of the gotten rectangle are its factors.

Otherwise, the rectangles with $x^2$ cards and 1 cards have to be rebuilt, until we get the desired rectangle.

This step is equivalent to find the integers $p$ and $q$ as in Theorem \ref{main}. If we get a rectangle, $p$ and $q$ are given by the rectangles built with cards of $x$ type. If it is not possible to get the desired rectangle, the integers $p$ and $q$ does not exist and the polynomial is not factorable.

By the experience of this factoring procedure with the polynomial box, we were able to establish our main theorem.

Notice that some of the well known methods to factorize a quadratic polynomial with integer coefficients, can be formulated as a consequence of Theorem \ref{main} and its proof.


\begin{corollary}\label{general}
If a polynomial $ax^2 + bx + c \in \mathbb{Z}[x]$ can be factorized over $\mathbb{Z}$ in two linear polynomials, then its factors can be found with the following method:
\begin{enumerate}[i)]
\item Decompose $ac$ in its prime factors, and use this decomposition to find two integers $p$ and $q$ such that $pq=ac$ and $p+q=b$.
\item Rewrite the polynomial and associate: $ax^2 + px + qx + c= (ax^2+px)+(qx+c)$.
\item Use the distributive property to write 
$$ax^2+px=p_1x(q_1x+q_2);$$
where $p_1 = m.c.d. (p,a)$, and $q_1$ and $q_2$ are suitable integers.
\item Find $p_2\in \mathbb{Z}$ such that 
$$qx+c=p_2(q_1x+q_2).$$
\item Use again the distributive property to factorize the polynomial
$$ax^2 + bx + c= p_1x(q_1x+q_2) + p_2(q_1x+q_2)= (p_1x+p_2)(q_1x+q_2).$$
\end{enumerate}
\end{corollary}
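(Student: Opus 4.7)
The plan is to observe that each of the five steps of the proposed method is already implicit in the proof of Theorem \ref{main}, so the corollary can be obtained essentially by unpacking that argument and presenting it as an algorithm.

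First, I would justify step (i) by invoking Theorem \ref{main} directly: the factorability of $ax^2+bx+c$ is equivalent to the existence of integers $p,q$ with $pq=ac$ and $p+q=b$, and the prime decomposition of $ac$ gives a finite search space in which such a pair can be sought. Step (ii) is then a tautology: since $b=p+q$, we have
$$ax^2+bx+c=ax^2+px+qx+c=(ax^2+px)+(qx+c),$$
and the associativity/commutativity of addition makes this a legal rewriting.

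For step (iii), I would set $p_1=m.c.d.(p,a)$ exactly as in the proof of Theorem \ref{main}, so that there are integers $q_1,q_2$ with $a=p_1q_1$ and $p=p_1q_2$; then
$$ax^2+px=p_1q_1x^2+p_1q_2x=p_1x(q_1x+q_2),$$
and the integers $q_1,q_2$ needed in step (iii) are precisely those produced by this construction. Step (iv) is the crux and is where the real work lies. I must produce $p_2\in\mathbb{Z}$ such that simultaneously $q=p_2q_1$ and $c=p_2q_2$. This is the point at which the proof of Theorem \ref{main} is essential: from $pq=ac$ together with $a=p_1q_1$, $p=p_1q_2$, one derives $q_2q=q_1c$, and because $m.c.d.(q_1,q_2)=1$ (also shown in the proof), $q_1$ divides $q$. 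Setting $p_2=q/q_1$ gives $q=p_2q_1$, and substituting back into $q_2q=q_1c$ yields $p_2q_2=c$; hence $qx+c=p_2(q_1x+q_2)$, which is step (iv).

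Finally, step (v) is just the distributive property applied to the common factor $(q_1x+q_2)$ extracted in steps (iii) and (iv):
$$ax^2+bx+c=p_1x(q_1x+q_2)+p_2(q_1x+q_2)=(p_1x+p_2)(q_1x+q_2).$$
The main obstacle in the proof is step (iv), since it is the only place where a nontrivial divisibility fact is required; once the coprimality of $q_1$ and $q_2$ established in Theorem \ref{main} is available, the remaining steps are formal manipulations, so the proof reduces to citing the relevant portion of that theorem and assembling the identities above.
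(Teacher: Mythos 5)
Your proof is correct and follows the same route as the paper: the paper's own proof is a one-line citation of Theorem \ref{main} and its proof for the existence of $p, q, p_1, p_2, q_1, q_2$, and your argument is simply a careful unpacking of exactly those portions of that proof (the definition $p_1 = m.c.d.(p,a)$, the coprimality of $q_1$ and $q_2$, and the resulting divisibility $q_1 \mid q$). You correctly identify step (iv) as the only nontrivial point, and your justification of it matches the computation in the proof of Theorem \ref{main}.
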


\begin{proof}
Theorem \ref{main} guarantees the existence of the integers $p$ and $q$, and its proof, the existence of  $p_1, p_2, q_1, q_2\in \mathbb{Z}$ as required.
\end{proof}

\begin{corollary}\label{sixth}
If a polynomial $x^2 + bx + c \in \mathbb{Z}[x]$ can be factorized over $\mathbb{Z}$ in two linear polynomials, then to factorize it, it is enough to find two integers $p$ and $q$ such that $pq=c$ and $p+q=b$, and
$$x^2 + bx + c = (x+p)(x+q).$$ 
\end{corollary}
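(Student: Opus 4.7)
The plan is to apply Theorem \ref{main} to the monic case $a = 1$. Under this specialization, the two defining identities of the theorem, $pq = ac$ and $p + q = b$, collapse to $pq = c$ and $p + q = b$, which are exactly the conditions appearing in the statement of the corollary. Since $x^2 + bx + c$ is assumed to factor over $\mathbb{Z}$ into two linear polynomials, Theorem \ref{main} guarantees the existence of a pair of integers $(p, q)$ satisfying these equations, which settles the first half of what must be proved.

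It then remains to verify that the resulting factorization is in fact $(x+p)(x+q)$. The most direct route is simply to expand $(x+p)(x+q) = x^2 + (p+q)x + pq$ and substitute $p+q = b$ and $pq = c$ to recover $x^2 + bx + c$. Alternatively, one can invoke the last assertion of Theorem \ref{main}, which gives the roots of $f$ as $-p/a$ and $-q/a$; specializing to $a = 1$ and using that $f$ is monic yields $x^2 + bx + c = (x-(-p))(x-(-q)) = (x+p)(x+q)$. Either route is a one-line verification, so there is no genuine obstacle: the corollary is the monic case of Theorem \ref{main}, repackaged in the form most commonly taught for factoring $x^2 + bx + c$ in a first algebra course.
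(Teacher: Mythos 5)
Your proposal is correct and is essentially the paper's argument: the paper simply says ``Follow Corollary \ref{general} with $a=1$,'' which is itself just the monic specialization of Theorem \ref{main}, exactly as you do. Your direct expansion of $(x+p)(x+q)$ is a slightly more self-contained way to confirm the final identity than tracing $p_1,p_2,q_1,q_2$ through Corollary \ref{general}, but the substance is identical.
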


\begin{proof}
Follow Corollary \ref{general} with $a=1$.
\end{proof}

\begin{corollary}\label{seventh}
If a polynomial $ax^2 + bx + c \in \mathbb{Z}[x]$, with $a\neq 1$, can be factorized over $\mathbb{Z}$ in two linear polynomials, then its factors can be found with the following method:
\begin{enumerate}[i)]
\item Consider the equivalent expression 
$$\frac{a^2x^2 + abx + ac}{a}.$$
\item Change the variable $y=ax$ and use Corollary \ref{sixth} to factorize the polynomial $a^2x^2 + abx + ac = y^2 + by+ac$.
\item Write the previous factorization in the expression of the first item
$$\frac{a^2x^2 + abx + ac}{a}=\frac{y^2 + by+ac}{a}=\frac{(y + p)(y+q)}{a}=\frac{(ax + p)(ax+q)}{a};$$
then divide the coefficients of the factors by the divisors of $a$ to obtain two factors in $\mathbb{Z}[x]$.
\end{enumerate}
\end{corollary}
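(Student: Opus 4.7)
The strategy is to reduce the factorization of a non-monic quadratic to the monic case of Corollary \ref{sixth} by a scaling trick: multiply numerator and denominator by $a$, perform the linear substitution $y=ax$, and then argue that dividing out the factor $a$ at the end yields an honest factorization over $\mathbb{Z}$.

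Step (i) is the trivial identity $ax^2+bx+c = (a^2x^2+abx+ac)/a$, valid since $a\neq 0$. For step (ii), since $ax^2+bx+c$ is factorable, Theorem \ref{main} produces integers $p,q$ with $pq=ac$ and $p+q=b$; the monic polynomial $y^2+by+ac$ then satisfies the hypothesis of Corollary \ref{sixth} with the same $p$ and $q$, so it factors as $(y+p)(y+q)$. Substituting back $y=ax$ gives the identity $(ax+p)(ax+q)=a^2x^2+abx+ac=a(ax^2+bx+c)$.

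The main obstacle is step (iii): one must verify that the formal quotient $(ax+p)(ax+q)/a$ really does split into two linear factors over $\mathbb{Z}$, and that the instruction to divide the coefficients of each factor by an appropriate divisor of $a$ is well-defined. The plan is to invoke the construction inside the proof of Theorem \ref{main}: setting $p_1=\gcd(p,a)$, that proof produces integers $q_1,q_2,p_2$ satisfying $a=p_1q_1$, $p=p_1q_2$, $q=q_1p_2$ and $p_2q_2=c$. These identities yield $ax+p=p_1(q_1x+q_2)$ and $ax+q=q_1(p_1x+p_2)$, so
$$(ax+p)(ax+q)=p_1q_1(q_1x+q_2)(p_1x+p_2)=a(p_1x+p_2)(q_1x+q_2),$$
and cancelling the common factor $a$ gives the integer factorization $ax^2+bx+c=(p_1x+p_2)(q_1x+q_2)$. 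The divisors of $a$ used in the recipe of step (iii) are precisely $p_1$ and its complementary factor $q_1=a/p_1$, so the method is well-defined and reproduces the desired factorization.
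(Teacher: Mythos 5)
Your proposal is correct and follows essentially the same route as the paper: reduce to the monic case via $y=ax$ and then split $a=p_1q_1$ so that $p_1\mid(ax+p)$ and $q_1\mid(ax+q)$. In fact your version is more careful than the paper's, which only asserts that ``some divisors of $a$ are divisors of $p$ and the others are divisors of $q$''; your explicit appeal to the construction $p_1=\gcd(p,a)$, $a=p_1q_1$, $p=p_1q_2$, $q=q_1p_2$, $c=p_2q_2$ from the proof of Theorem \ref{main} is exactly what is needed to make that assertion rigorous.
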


\begin{proof}
The polynomial $y^2 + by+ac$ can be factorized because, by Theorem \ref{main}, there exist $p,q \in \mathbb{Z}$ such that $pq=ac$ and $p+q=b$. 

Some divisors of $a$ are divisors of $p$, and the other divisors of $a$ are divisors of $q$, then
$$\frac{(ax + p)(ax+q)}{a} = (p_1x+p_2)(q_1x+q_2);$$
for some $p_1, p_2, q_1, q_2\in \mathbb{Z}$, as we wanted to prove.
\end{proof}

\begin{corollary}\label{squareDif}
A polynomial of the form $a^2x^2-c^2$, with $a, c \in \mathbb{Z}$ can be factorized over $\mathbb{Z}$ in two linear polynomials, moreover
$$a^2x^2-c^2=(ax-c)(ax+c).$$
\end{corollary}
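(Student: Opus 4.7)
The plan is to view $a^2x^2 - c^2$ as an instance of Theorem \ref{main} and exhibit the required pair $(p,q)$ explicitly. Writing the polynomial in the form $f(x) = Ax^2 + Bx + C$ with $A = a^2$, $B = 0$ and $C = -c^2$, I need to produce integers $p, q$ such that $pq = AC = -(ac)^2$ and $p + q = B = 0$. The natural and essentially forced choice is $p = ac$ and $q = -ac$, for which $pq = -(ac)^2$ and $p + q = 0$ hold trivially. Since these are integers, Theorem \ref{main} immediately guarantees that $a^2x^2 - c^2$ factors over $\mathbb{Z}$ into two linear factors.

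For the explicit form of the factorization I see two equally short routes. The first is to trace the construction in the proof of Theorem \ref{main} with $(A,p,q) = (a^2, ac, -ac)$: one computes $p_1 = \gcd(p,A) = \gcd(ac, a^2)$ and the associated $q_1, q_2, p_2$, after which the resulting factors simplify to $(ax - c)$ and $(ax + c)$. The second, cleaner route is to just verify the claimed identity by expanding $(ax - c)(ax + c) = a^2 x^2 - c^2$, appealing to the standard difference-of-squares pattern $(u-v)(u+v) = u^2 - v^2$ with $u = ax$ and $v = c$. Either way, Theorem \ref{main} does the conceptual work of asserting that such a factorization exists, and the expansion merely identifies the factors.

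There is no real obstacle here: the entire corollary is a direct specialization of Theorem \ref{main} to the case of vanishing linear coefficient, and the only thing to check beyond existence is a one-line algebraic identity.
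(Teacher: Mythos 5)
Your proposal is correct and takes essentially the same approach as the paper: the paper likewise chooses $p=ac$, $q=-ac$ to invoke Theorem \ref{main}, and then obtains the explicit factors by tracing the construction in that theorem's proof ($p_1=a$, $q_1=a$, $q_2=c$, $p_2=-c$), which is your first route. Your second route (directly expanding $(ax-c)(ax+c)$) is a harmless shortcut that yields the same conclusion.
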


\begin{proof}
Notice that $p=ac$ and $q=-ac$ are integers that satisfy $pq=-a^2c^2$ and $p+q=0$, by Theorem \ref{main} the polynomial $a^2x^2-c^2$ is the product of two linear polynomials in $\mathbb{Z}[x]$.

Following the proof of Theorem \ref{main}, we find $p_1= m.c.d. (p,a^2) = a$, therefore $q_1=a$ and $q_2=c$, then $p_2=-c$, that is
$$a^2x^2-c^2=(ax-c)(ax+c).$$
\end{proof}

\begin{corollary}\label{PerfectSquareTri}
A polynomial of the form $a^2x^2 \pm 2ab + b^2$, with $a, b \in \mathbb{Z}$ can be factorized over $\mathbb{Z}$ in two linear polynomials, moreover
$$a^2x^2 \pm 2ab + b^2=(ax \pm b)^2.$$
\end{corollary}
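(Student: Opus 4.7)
The plan is to mimic the proof of Corollary~\ref{squareDif}: apply Theorem~\ref{main} with a judicious choice of $p$ and $q$ to obtain reducibility, and then read off the factors. (I read the stated middle term $\pm 2ab$ as $\pm 2abx$, since the polynomial is quadratic in $x$.)

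First, I would set $p=q=\pm ab$, with the sign matching that of the middle coefficient. Viewing the polynomial as $Ax^2+Bx+C$ with $A=a^2$, $B=\pm 2ab$ and $C=b^2$, a direct check gives $pq=a^2b^2=AC$ and $p+q=\pm 2ab=B$. Hence Theorem~\ref{main} immediately guarantees that $a^2x^2\pm 2abx+b^2$ factors as a product of two linear polynomials over $\mathbb{Z}$, and the same theorem computes its roots: both equal $-p/A=\mp b/a$.

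Second, since the leading coefficient is $a^2$ and the only root is the double value $\mp b/a$, the polynomial must equal $a^2(x\mp b/a)^2=(ax\mp b)^2$, and the signs line up with the $\pm 2abx$ we began with. A one-line expansion $(ax\pm b)^2=a^2x^2\pm 2abx+b^2$ confirms the claimed factorization.

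If one prefers to avoid passing through roots, an alternative is to run the constructive recipe from the proof of Theorem~\ref{main}: take $p_1=m.c.d.(p,A)$ and solve successively for $q_1$, $q_2$ and $p_2$. The only mild obstacle here is that $m.c.d.(\pm ab, a^2)$ depends on $m.c.d.(a,b)$, which would force a small case analysis; the root-based argument or the direct expansion above sidesteps this entirely, so there is no real difficulty remaining.
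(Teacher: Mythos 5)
Your proposal is correct, and the first half coincides with the paper: you choose $p=q=\pm ab$, verify $pq=a^2b^2=AC$ and $p+q=\pm 2ab=B$, and invoke Theorem~\ref{main} to get reducibility, exactly as the authors do. Where you diverge is in extracting the explicit factors. The paper continues by running the constructive recipe from the proof of Theorem~\ref{main}, asserting $p_1=m.c.d.(ab,a^2)=a$ and reading off $q_1=a$, $q_2=\pm b$, $p_2=\pm b$; you instead use the root formula from the statement of Theorem~\ref{main} (double root $\mp b/a$) together with the leading coefficient $a^2$ to write the polynomial as $a^2(x\mp b/a)^2=(ax\mp b)^2$, with a direct expansion as a final check. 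Your route is not only valid but actually more robust: the identity $m.c.d.(ab,a^2)=a$ that the paper relies on fails whenever $m.c.d.(a,b)>1$ (for instance $a=2$, $b=4$ gives $m.c.d.(8,4)=4$), an obstacle you correctly anticipated and deliberately sidestepped. The paper's approach, when it works, has the pedagogical merit of exercising the same mechanism as Corollary~\ref{general}; your root-based argument buys uniformity in the signs and freedom from any case analysis on $m.c.d.(a,b)$. (Both arguments, like the paper's, implicitly assume $a\neq 0$ so that Theorem~\ref{main} applies.)
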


\begin{proof}
First, take $p=q=ab$, then $pq=a^2b^2$ and $p+q=2ab$, by Theorem \ref{main} the polynomial $a^2x^2+2ab+b^2$ is the product of two linear polynomials in $\mathbb{Z}[x]$. 

Now follow the proof of Theorem \ref{main} to find $p_1= m.c.d. (p,a^2) = a$, therefore $q_1=a$ and $q_2=b$, then $p_2=b$, that is
$$a^2x^2+2ab+b^2 = (ax+b)^2.$$

For $a^2x^2-2ab+b^2$, choose $p=q=-ab$. Notice that the polynomial has two linear factors because $pq=a^2b^2$ and $p+q=-2ab$ (Theorem \ref{main}).

If we follow the proof again, $p_1= m.c.d. (p,a^2) = a$, therefore $q_1=a$ and $q_2=-b$, then $p_2=-b$, that is
$$a^2x^2-2ab+b^2 = (ax-b)^2.$$
\end{proof}

A polynomial with rational coefficients has the same roots as a polynomial in $\mathbb{Z}[x]$, then Theorem \ref{main} allow us to state the next result

\begin{theorem}
A quadratic polynomial with rational coefficients
$$f(x)=\frac{a_1}{a_2}x^2 + \frac{b_1}{b_2}x + \frac{c_1}{c_2};$$
with $a_i, b_i, c_i \in \mathbb{Z}$ for $i\in \{1,2\}$, has rational roots if and only if there exist two integers $p$ and $q$ such that $pq=a_1a_2b_2^2c_1c_2$ and $p+q=b_1a_2c_2$.
\end{theorem}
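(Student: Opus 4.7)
The plan is to reduce to Theorem \ref{main} by clearing denominators. Multiplying $f(x)$ by the nonzero integer $N=a_2b_2c_2$ yields the integer polynomial
$$g(x) = a_1 b_2 c_2\, x^2 + b_1 a_2 c_2\, x + c_1 a_2 b_2 \in \mathbb{Z}[x],$$
which has exactly the same roots as $f(x)$. Hence $f$ has rational roots if and only if $g$ does.

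Next I would show that $g$ has rational roots if and only if $g$ factors over $\mathbb{Z}$ into two linear polynomials. One direction is immediate: a factorization $g(x)=(\alpha x+\beta)(\gamma x+\delta)$ with integer coefficients exhibits the two roots $-\beta/\alpha$ and $-\delta/\gamma$ as rationals. For the converse, if $u/v$ is a rational root of $g$ in lowest terms, then the Rational Root Theorem guarantees $v\mid a_1b_2c_2$ and $u\mid c_1a_2b_2$, so $vx-u$ is a primitive linear factor in $\mathbb{Z}[x]$; dividing $g(x)$ by $vx-u$ in $\mathbb{Z}[x]$ produces a second linear integer factor. (Equivalently, one may invoke Gauss's lemma to promote any factorization over $\mathbb{Q}$ to one over $\mathbb{Z}$.)

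Having reduced the existence of rational roots of $f$ to $\mathbb{Z}$-factorability of $g$, I would then apply Theorem \ref{main} to $g$. Its leading coefficient is $a_1b_2c_2$ and its constant term is $c_1a_2b_2$, so the criterion furnished by the theorem states that $g$ factors into two linear polynomials over $\mathbb{Z}$ if and only if there exist $p,q\in\mathbb{Z}$ with
$$pq=(a_1b_2c_2)(c_1a_2b_2)=a_1a_2b_2^2c_1c_2 \quad\text{and}\quad p+q=b_1a_2c_2,$$
which is precisely the stated condition.

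The only real obstacle is the middle step, passing from \emph{rational root} to \emph{$\mathbb{Z}$-factorization}: a priori one only knows that $g$ splits over $\mathbb{Q}$, and the whole statement hinges on being able to take those linear factors to lie in $\mathbb{Z}[x]$ so that Theorem \ref{main} applies. Once this is handled (by the Rational Root Theorem or Gauss's lemma, as above), the rest of the proof is a mechanical bookkeeping of the coefficients of $g$, with no further computation required.
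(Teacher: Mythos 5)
Your proposal is correct and follows essentially the same route as the paper: clear denominators to obtain the integer polynomial $g(x)=a_1b_2c_2x^2+a_2b_1c_2x+a_2b_2c_1$, observe it has the same roots as $f$, and apply Theorem \ref{main} to its leading and constant coefficients. The only difference is that you justify the equivalence ``rational roots $\Leftrightarrow$ factorization over $\mathbb{Z}$'' via the Rational Root Theorem and Gauss's lemma, whereas the paper simply recalls it as a known fact.
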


\begin{proof}
The polynomial $f(x)$ has the same roots as 
$$g(x)=a_1b_2c_2x^2 + a_2b_1c_2x + a_2b_2c_1;$$
which has integer coefficients. 

Recall that a polynomial  in $\mathbb{Z}[x]$ has rational roots if and only if it can be factorized over $\mathbb{Z}$ in two linear polynomials. By Theorem \ref{main} the polynomial $g(x)$ is factorable  if and only if there exist two integers $p$ and $q$ such that $pq=a_1a_2b_2^2c_1c_2$ and $p+q=b_1a_2c_2$, as we wanted to prove. 
\end{proof}

The previous results invite us to take into account the importance of the historical development of algebra and the use of concrete material for its teaching. In addition, they allow observing the impact that these two factors have when clarifying or projecting the path towards the development of a formal and abstract idea related to algebra.

Also, the origin of the research and the results recall the importance of systematizing experiences in the classroom that can uncover important patterns, contributing to the deepening of algebra and its teaching.


Ivon Dorado\\
Departamento de Matem\'aticas, Universidad Nacional de Colombia, Bogot\'a 111211\\
iadoradoc@unal.edu.co

$ $

Ricardo Torres\\
Maestr\'ia en Ense\~nanza de las Ciencias exactas y naturales, Universidad Nacional de Colombia, Bogot\'a 111211\\
riatorresci@unal.edu.co

\end{document}